

\documentclass{amsart}
\usepackage{amsmath}
\usepackage{amsfonts}
\usepackage{graphics}
\usepackage{epsfig}
\usepackage{amssymb}
\usepackage{amscd}
\usepackage[all]{xy}
\usepackage{latexsym}
\usepackage{graphicx}
\usepackage{multirow}
\usepackage{geometry}

\theoremstyle{remark}
\newtheorem{lem}{\bf Lemma}[section]

\newtheorem{thm}{\bf Theorem}[section]
\newtheorem{cor}{\bf Corollary}[section]
\newtheorem{ex}{\bf Example}[section]

\newtheorem{prop}{\bf Proposition}[section]

\numberwithin{equation}{section} \numberwithin{figure}{section}

\renewcommand*{\to}{\rightarrow}
\renewcommand*{\bar}[1]{\overline{#1}}

\newcommand{\Aut}{\operatorname{Aut}}


\newcommand{\supp}{\operatorname{supp}}



\newcommand{\mb}[1]{\mathbb{#1}} 

\newcommand{\Hom}{\operatorname{Hom}}

\newcommand{\mc}[1]{\mathcal{#1}}
\newcommand{\mk}[1]{\mathfrak{#1}}

\newcommand{\Jac}{\operatorname{Jac}}

\textwidth = 6in \oddsidemargin = 0.25in \evensidemargin = 0.25in
\textheight = 8.7in \topmargin= -0.2 in
\newcommand{\dsp}{\displaystyle}
\newcommand{\mf}[1]{\mathbf{#1}}
\newcommand{\vol}{\operatorname{vol}}
\newcommand{\Div}{\operatorname{Div}}
\newcommand{\Eff}{\operatorname{Eff}}

\title[Mean field Equations on hyperelliptic curves]{Explicit Solutions to the mean field equations on hyperelliptic Curves of genus two}

\author{Jia-Ming (Frank) Liou}
\address{Department of Mathematics\\
National Cheng Kung University, Taiwan\\
fjmliou@mail.ncku.edu.tw}
\address{NCTS, Mathematics}

\begin{document}
\large 
\setcounter{section}{0}
\maketitle

\begin{abstract}\large 
Let $X$ be a complex hyperelliptic curve of genus two equipped with the canonical metric $ds^{2}$. We study mean field equations on complex hyperelliptic curves and show that the Gaussian curvature function of $(X,ds^{2})$ determines an explicit solution to a mean field equation.
\end{abstract}
{\bf Keywords:} Abel-Jacobi map, commutative $C^{*}$-algebra, divisors, Gaussian curvature, mean field equations, Weierstrass points.\\

{\bf 2010 Mathematical Subject Classification:} 14H37, 14H40, 14H55, 58J05\\

\section{Introduction}
Mean field equations came originally from the study of prescribing (Gaussian) curvature problems in differential geometry. In \cite{LW1}, \cite{LW2}, Lin and Wang studied the mean field equation of the following type
\begin{equation}\label{MFELW}
\Delta u+\rho e^{u}=\rho\delta_{0},\quad\rho\in\mb R_{+}
\end{equation}
on a flat torus $T$ where $\Delta$ is the Laplace-Beltrami operator on $T.$ They discovered in \cite{LW1} that when $\rho=8\pi,$ (\ref{MFELW}) has a solution if and only if the set of critical points of the Green's function on $T$ contains points other than the three half-period points. In a recent paper \cite{LW3}, Chai and Lin and Wang showed that when $\rho=4\pi(2n+1),$ where $n$ is nonnegative integer, the number of solutions to (\ref{MFELW}) is $n+1$ except for a finite number of conformal isomorphism classes of flat tori and when $\rho=8\pi n,$ where $n$ is a positive integer, the solvability of (\ref{MFELW}) depends on the moduli space of flat tori. In this article, we consider the following mean field equation
\begin{equation}\label{MFEF}
\Delta u+\rho e^{u}=4\pi\sum_{i=1}^{m}n_{i}\delta_{P_{i}}
\end{equation}
on a compact Riemann surface $X$ of genus two with a hermitian metric $ds^{2},$ where $\rho$ and $n_{i}$ are some integers and $\{P_{i}\}$ are distinct points on $X.$ When $ds^{2}$ is the canonical metric on $X,$ we can prove that the Gaussian curvature function of $(X,ds^{2})$ determines an explicit solution to (\ref{MFEF}). In this case, $m=6$ and $n_{i}=1$ and $P_{i}$ are the Weierstrass points of $X$ for $1\leq i\leq 6.$ Furthermore, we also discover that given any finite subset of distinct points $\{Q_{1},\cdots,Q_{s}\}$ of $X$ and any finite set of natural numbers $\{m_{1},\cdots,m_{s}\},$ for a certain choice of nonnegative continuous function $h$ on $X,$ there exists a real valued function $v$ defined and smooth on $X\setminus \{Q_{1},\cdots,Q_{s},P_{1},\cdots,P_{6}\}$ so that $v$ satisfies the following mean field equation
\begin{equation}\label{MFEF2}
\Delta v(x)+\rho h(x)e^{v(x)}=4\pi\sum_{i=1}^{6}\delta_{P_{i}}-2\pi\sum_{j=1}^{s}m_{j}(\delta_{Q_{j}}+\delta_{\iota(Q_{j})})
\end{equation}
where $\iota:X\to X$ is an involution on $X$ and $\rho$ is some integer. The existence of the involution on $X$ is due to the fact that any genus two compact Riemann surface is hyperelliptic. Let us formulate the precise statements of our main results as follows.

Let $X$ be a compact Riemann surface\footnote{All the Riemann surfaces in this paper are assumed to be connected.} of genus $g\geq 2$ and $ds^{2}$ be any hermitian metric on $X.$ 
The Gauss-Bonnet theorem tells us that
$$\int_{X}Kd\nu=2\pi\chi(X)=4\pi(1-g)<0,$$
where $K$ is the Gaussian curvature function of $(X,ds^{2})$ and $d\nu$ is the volume form of $(X,ds^{2}).$ Then the open set $U=\{P\in X:K(P)<0\}$ is nonempty. Let $\mu:X\to \Jac(X)$ be the Abel-Jacobi map and $d\widetilde{s}^{2}$ be the standard flat metric on $\Jac(X).$ The canonical metric on $X$ is the pull back metric $\mu^{*}d\widetilde{s}^{2}.$ Now let $ds^{2}$ be the canonical metric on $X$ and $K$ be the corresponding Gaussian curvature function. It is not difficult to show that $K$ is nonpositive on $X,$ for example, see Section \ref{ab}, or \cite{JL}. Let $Z$ be the set of zeros of $K.$ Then $U=X\setminus Z.$ In \cite{JL}, Lewittes shows that $Z$ is nonempty if and only if $X$ is a hyperelliptic Riemann surface and $Z$ is the set of all Weierstrass points of $X.$ As a consequence, the subset $Z$ of $X$ is either empty or a finite subset of $X.$ If $X$ is not a hyperelliptic Riemann surface, $U=X.$ When $X$ is a compact Riemann surface of genus two, $X$ is hyperelliptic. Then we have the following result:


\begin{thm}\label{thm2}
Let $(X,ds^{2})$ be a compact Riemann surface of genus $g=2$ where $ds^{2}$ is the canonical metric on $X.$ Define a function $u:X\setminus Z\to\mb R$ by $u=\log(-K),$ where $Z=\{P_{1},\cdots,P_{6}\}$ is the set of all Weierstrass points of $X.$ Then $u$ is smooth on $X\setminus Z$ and satisfies the following mean field equation:
\begin{equation}\label{e3}
\Delta u+6e^{u}=4\pi\sum_{i=1}^{6}\delta_{P_{i}}.
\end{equation}
Here $\delta_{P}:C^{\infty}(X)\to\mb R$ is the Dirac delta function defined by $\delta_{P}(\varphi)=\varphi(P)$ for any $\varphi\in C^{\infty}(X)$ and for any $P\in X.$
\end{thm}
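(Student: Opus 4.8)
The plan is to reduce the mean field equation \eqref{e3} to a pointwise curvature identity plus a local analysis near each Weierstrass point. First I would recall the standard relation between the Gaussian curvature and the Laplace--Beltrami operator: if $ds^{2} = 2\lambda\,|dz|^{2}$ in a local holomorphic coordinate $z$ (with the convention matching $\Delta$ as used in the statement), then $K = -\dfrac{1}{\lambda}\,\partial_{z}\partial_{\bar z}\log\lambda$, equivalently $\Delta \log\lambda = -2K\lambda = -K\,\mathrm{(scalar\ factor)}$. On the open set $X\setminus Z$ where $K<0$, we may set $u = \log(-K)$; then a direct computation gives $\Delta u = \Delta\log(-K)$, and combining with the curvature formula and the fact that $-K = e^{u}$ yields $\Delta u + cK\,e^{?} = 0$ away from $Z$ for the appropriate constant. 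The constant $6$ in \eqref{e3} should come from tracking the precise normalization of the canonical metric (recall $ds^{2} = \mu^{*}d\widetilde s^{2}$, so $\lambda$ is $\sum|\omega_{i}|^{2}$-type in an orthonormal basis of holomorphic differentials) together with the convention for $\Delta$; I would pin this down by working in an explicit hyperelliptic model $y^{2} = f(x)$ with $\deg f = 5$ or $6$, using the basis $\{dx/y,\ x\,dx/y\}$ of $H^{0}(X,\Omega^{1})$, and computing $\lambda$ and then $K$ directly. This reduces the smooth part of the equation to an algebraic identity for the specific curvature function of the canonical metric on a genus-two curve.

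Next I would handle the distributional part: the claim is that $\Delta u$, computed as a distribution on all of $X$, picks up $4\pi\delta_{P_{i}}$ at each Weierstrass point $P_{i}$. The key input is the precise order of vanishing of $K$ at a Weierstrass point. By Lewittes' result (cited as \cite{JL}), $Z$ is exactly the set of Weierstrass points, and I would use the explicit local form: near a Weierstrass point, in a suitable coordinate $t$ with $P_{i} = \{t=0\}$, one has $-K(t) \sim C|t|^{2}$ as $t\to 0$ (the vanishing is quadratic in modulus because a Weierstrass point of a genus-two curve has a very specific gap sequence, and the canonical embedding has a hyperflex-type behaviour there). Hence $u = \log(-K) = \log|t|^{2} + (\text{smooth}) = 2\log|t| + (\text{smooth near }P_{i})$. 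Since $\Delta \log|t|^{2} = 4\pi\delta_{0}$ in the distributional sense (with the Euclidean Laplacian; the conformal factor does not affect the coefficient of the delta because $\delta_{0}$ is supported at one point and the metric is smooth and nonzero-ish there — more carefully, $\Delta_{g}(2\log|t|) $ as a distribution against smooth test functions still yields $4\pi\delta_{P_{i}}$ by a standard computation), summing over the six Weierstrass points produces exactly the right-hand side $4\pi\sum_{i=1}^{6}\delta_{P_{i}}$. The smooth remainder contributes to the $6e^{u}$ term via the pointwise identity from the first step, and one checks the two pieces are consistent.

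Concretely, the steps in order are: (1) establish $K\le 0$ with equality precisely on $Z$ (cite \cite{JL} or Section \ref{ab}); (2) in the hyperelliptic model, compute $\lambda$, then $K$, then verify the smooth identity $\Delta u + 6e^{u}=0$ on $X\setminus Z$ by direct calculation, fixing all normalization constants so that the coefficient is exactly $6$; (3) determine the local expansion $-K = C|t|^{2}(1+o(1))$ at each Weierstrass point, so $u - 2\log|t|$ extends smoothly across $P_{i}$; (4) assemble the distributional statement: for any $\varphi\in C^{\infty}(X)$, split $\int_{X}u\,\Delta\varphi\,d\nu$ over $X\setminus Z$ and small disks around each $P_{i}$, integrate by parts, and take the limit of the radii to zero, the boundary terms yielding $4\pi\varphi(P_{i})$ by the logarithmic singularity. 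I expect the main obstacle to be step (2)–(3): getting the constant $6$ exactly right requires careful bookkeeping of the factor-of-two conventions relating $ds^{2}$, the area form, $\Delta$, and the curvature formula, and correctly reading off the quadratic (rather than higher-order) vanishing of $K$ at a Weierstrass point from the hyperelliptic equation. Once those local computations are nailed down, the global distributional identity follows from a routine cutoff-and-integrate-by-parts argument.
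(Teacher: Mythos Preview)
Your proposal is correct and follows essentially the same route as the paper: pass to the hyperelliptic model $y^{2}=f(x)$, compute the conformal factor and $K$ explicitly in the various local charts (generic points, the points at infinity, and the Weierstrass points with coordinate $\zeta=\sqrt{x-e_{k}}$), read off the quadratic vanishing $-K\sim C|\zeta|^{2}$ so that $u=2\log|\zeta|+(\text{smooth})$ near each $P_{k}$, and then combine the pointwise identity on $X\setminus Z$ with the standard $\Delta(2\log|\zeta|)=4\pi\delta$ computation. The paper likewise obtains the constant $6$ not conceptually but by writing out $K$ explicitly in genus two and checking that the residual smooth term $\Phi:=\Delta u+6e^{u}$ vanishes identically (and in fact shows $\Phi\neq 0$ for $g>2$), which is exactly your step~(2).
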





Since any hyperelliptic Riemann surface of genus $g$ is conformally equivalent to the compactification of an affine plane curve $y^{2}=f(x)$ over $\mb C,$ where $f(x)$ is a complex polynomial of degree $2g+1$ or $2g+2.$ The compactification of the complex affine plane curve $y^{2}=f(x)$ is a complex smooth projective curve, called a complex hyperelliptic curve. We will prove Theorem \ref{thm2} in the case when $X$ is a complex hyperellptic curve defined by the polynomial equation $y^{2}=f(x).$ 

Let $X$ be the complex hyperelliptic curve of genus $g$ defined by $y^{2}=f(x).$ For every point $P\in X,$ we will construct a nonnegative continuous function $F_{P}:X\to\mb R$ in Section \ref{mf} such that the zero set of $F_{P}$ is $\{P\}$ and $F_{P}$ is smooth on $X\setminus\{P\}.$ If $D=\sum_{P\in X}n_{P}P$ is a divisor on $X,$ we define $F_{D}=\prod_{P\in X}F_{P}^{n_{P}}.$ Then $F_{D}$ is smooth and nonnegative on $X\setminus\supp(D).$ Let $\Eff(X)$ be the vector subspace of $C(X;\mb C),$ the space of complex valued continuous functions on $X,$ spanned by $1$ and $F_{D}$ where $D$ runs through all effective divisors on $X.$ We discover that $\Eff(X)$ is a unital filtered $*$-subalgebra of $C(X,\mb C)$ whose closure in $C(X,\mb C)$ is isomorphic to the algebra of complex valued continuous functions on the one dimensional complex projective space $\mb P^{1}$ (with respect to the complex analytic topology).

\begin{thm}\label{thmeff}
Let $X$ be a complex hyperelliptic curve of genus $2$ and $\{P_{1},\cdots,P_{6}\}$ be the set of all Weierstrass points of $X.$ Given any set of positive integers $\{m_{1},\cdots,m_{s}\}$ and any finite set $\{Q_{1},\cdots,Q_{s}\}$ of distinct points on $X$ such that $P_{i}\not\in\{Q_{1},\cdots,Q_{s}\}$ for $1\leq i\leq 6,$ the following mean field equation
\begin{equation}\label{MFELW2}
\Delta v(x)+2\left(6-\sum_{i=1}^{s}m_{i}\right)F_{D}(x)e^{v(x)}=4\pi\sum_{i=1}^{6}\delta_{P_{i}}-2\pi\sum_{j=1}^{s}m_{j}(\delta_{Q_{j}}+\delta_{\iota(Q_{j})})
\end{equation}
on $X$ always posses a solution $v$ defined and smooth on $X\setminus\{P_{1},\cdots,P_{6},Q_{1},\cdots,Q_{s}\},$ where $D$ is the divisor on $X$ defined by $D=\sum_{i=1}^{s}m_{i}Q_{i}$ and $\iota:X\to X$ is the hyperelliptic involution on $X.$
\end{thm}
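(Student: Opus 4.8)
The plan is to reduce Theorem~\ref{thmeff} to Theorem~\ref{thm2} by a substitution that absorbs the extra Dirac masses at the $Q_j$ and $\iota(Q_j)$ into the function $u=\log(-K)$. First I would recall from Theorem~\ref{thm2} that $u$ is smooth on $X\setminus\{P_1,\dots,P_6\}$ and satisfies $\Delta u+6e^u=4\pi\sum_{i=1}^6\delta_{P_i}$. The idea is to set $v=u+\log F_D$ (more precisely $v=u-\log F_D$, with the sign chosen so the curvature term comes out right), where $D=\sum_{i=1}^s m_iQ_i$, so that $e^v=(-K)\,F_D^{\pm1}$ and the coefficient $2(6-\sum m_i)$ in \eqref{MFELW2} is exactly what is needed to rewrite $6e^u$ together with the contribution of $\Delta\log F_D$ as a single term of the form $2(6-\sum m_i)F_D e^v$.

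The key computational step is to determine $\Delta\log F_P$ as a current on $X$. Since $F_P$ is built (in Section~\ref{mf}) to be a nonnegative continuous function with zero set exactly $\{P\}$ and smooth elsewhere, I expect that $F_P$ behaves near $P$ like $|z-z(P)|^{2}$ in a local coordinate (or like $|w|^{2}$ in the appropriate coordinate at a Weierstrass point / point at infinity), so that $\Delta\log F_P=4\pi\delta_P$ away from a smooth background term; the precise normalization will follow from the explicit formula for $F_P$ given earlier. Granting $\Delta\log F_P=4\pi\delta_P-2\pi(\text{smooth})$ or, more likely in this hyperelliptic setting where $F_P$ is naturally symmetric under $\iota$, $\Delta\log F_P=2\pi(\delta_P+\delta_{\iota(P)})-(\text{smooth correction})$, summing over $P$ with multiplicities $m_i$ produces exactly the divisor term $-2\pi\sum_{j=1}^s m_j(\delta_{Q_j}+\delta_{\iota(Q_j)})$ on the right-hand side of \eqref{MFELW2}, together with a smooth term that must be matched against the change in the exponential term. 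Then I would verify that with $v=u-\log F_D$ the equation $\Delta v+2(6-\sum m_i)F_D e^v=\Delta u-\Delta\log F_D+2(6-\sum m_i)(-K)=4\pi\sum\delta_{P_i}-\Delta\log F_D$ collapses to \eqref{MFELW2}, using $6e^u=6(-K)$ from Theorem~\ref{thm2} and the identity $2(6-\sum m_i)F_D e^v = 6e^u - (\text{the smooth part of }\Delta\log F_D)$, which is where the specific coefficient $2(6-\sum m_i)$ is forced.

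The main obstacle I anticipate is pinning down the distributional Laplacian of $\log F_P$ precisely — both the coefficient of the Dirac masses and, crucially, the smooth remainder, since the remainder has to conspire with $6(-K)$ so that everything can be repackaged as a single multiple of $F_D e^v$ rather than leaving a leftover smooth inhomogeneity. This is really a statement about how faithfully $F_P$ reproduces the local model of $|z|^2$ and whether the product $F_D$ interacts correctly with the canonical metric's curvature; the filtered $*$-algebra structure of $\Eff(X)$ and its identification with $C(\mathbb{P}^1)$ mentioned before the theorem is presumably what guarantees $F_D$ is well enough behaved (continuous, with the right vanishing orders, compatible with $\iota$) for this to work. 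A secondary point to check is that $v$ extends smoothly across $\{P_1,\dots,P_6\}\setminus\supp(D)$ and across $\supp(D)$ only with the prescribed logarithmic singularities, and that the hypothesis $P_i\notin\{Q_1,\dots,Q_s\}$ is exactly what keeps the singular supports of $u$ and $\log F_D$ disjoint so the two sets of Dirac masses do not collide; no sign constraint on $6-\sum m_i$ is needed because the equation is solved by explicit substitution rather than by a variational or continuity argument.
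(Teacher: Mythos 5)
Your proposal is essentially the paper's own argument: the paper establishes $\Delta \log F_{P}=\frac{1}{g+1}K-\frac{4}{g+1}F_{W}+2\pi(\delta_{P}+\delta_{\iota(P)})$ by exactly the Green's-identity computation you anticipate (with $F_{P}$ vanishing to order $|\zeta|^{2}$ at a Weierstrass point and to order one at $P$ and $\iota(P)$ otherwise), and then writes down the explicit solution $v=u_{W-D}=\log(F_{W}/F_{D})$, which for $g=2$ is your $u-\log F_{D}$ up to the additive constant $\log 2$ because $-K=2F_{W}$. The one correction to your write-up is that this constant actually matters for the coefficient of $e^{v}$: the solution is $\log F_{W}-\log F_{D}$, whereas $\log(-K)-\log F_{D}$ as you wrote it would satisfy the equation with coefficient $\left(6-\sum_{i}m_{i}\right)$ rather than $2\left(6-\sum_{i}m_{i}\right)$.
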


{\bf Acknowledgements} We are indebted to Prof. C.S. Lin and Prof. C. L. Wang from the National Taiwan university for introducing us the topic of mean field equations. Without them, this paper would not appear. The author was partially supported by MOST 104-2115-M-006-014 Grant, and by the Headquarters of University Advancement at National Cheng Kung University, which is sponsored the Ministry of Education, Taiwan, ROC, and by NCTS. The author also would like
to thank the referees for their valuable comments which helped to improve the manuscript. 

\section{The Gaussian curvature function of the canonical metric on Compact Riemann surfaces}\label{ab}

In this section, we will briefly review some materials in \cite{JL} which we need in this paper. Let $X$ be a compact Riemann surface of genus $g\geq 2$ and $\Omega_{X}$ be the sheaf of holomorphic one forms on $X.$ The space $H^{0}(X,\Omega_{X})$ of global sections of $\Omega_{X}$ is a complex vector space of dimension $g$ while the integral homology group $H_{1}(X,\mb Z)$ of $X$ is a free abelian group of rank $2g.$ Choose a symplectic $\mb Z$-basis $\{a_{1},\cdots,a_{g},b_{1},\cdots,b_{g}\}$ for the integral homology group $H_{1}(X,\mb Z)$ of $X$ with respect to the intersection form on $H_{1}(X,\mb Z)$ and choose a complex basis $\{\omega_{1},\cdots,\omega_{g}\}$ for $H^{0}(X,\Omega_{X}).$ For each $1\leq i,j\leq g,$ we denote by
$$\int_{a_{j}}\omega_{i}=\alpha_{ij},\quad \int_{b_{j}}\omega_{i}=\beta_{ij}.$$
Denote the $g\times g$ complex matrices $(\alpha_{ij})_{1\leq i,j\leq g}$ and $(\beta_{ij})_{1\leq i,j\leq g}$ by $A$ and $B$ respectively. The $g\times 2g$ complex matrix $(A,B)$ is called a period matrix for $X.$ When $A$ is the $g\times g$ identity matrix $I_{g}=(\delta_{ij})_{1\leq i,j\leq g},$ the period matrix is said to be normalized with respect to the basis $\{a_{1},\cdots,a_{g},b_{1},\cdots,b_{g}\}.$ When the period matrix is normalized, the basis $\{\omega_{1},\cdots,\omega_{g}\}$ forms an orthonormal basis for $H^{0}(X,\Omega_{X})$ with respect to the hermitian inner product 
\begin{equation}\label{inn}
\langle \omega,\eta\rangle_{H^{0}(X,\Omega_{X})}=\frac{i}{2}\int_{X}\omega\wedge\bar{\eta}.
\end{equation}
Let $\Lambda$ be the lattice of $\mb C^{g}$ generated by the column vectors of the period matrix $(A,B).$ The Jacobian variety $\Jac(X)$ is the complex torus $\Jac(X)=\mb C^{g}/\Lambda.$ Let $P_{0}$ be a based point of $X.$ The Abel Jacobi map $\mu:X\to \Jac(X)$ is defined to be
$$\mu(P)=\left(\int_{P_{0}}^{P}\omega_{1},\cdots,\int_{P_{0}}^{P}\omega_{g}\right)\mod\Lambda.$$

Let $d\widetilde{s}_{H}^{2}=\sum_{i=1}^{g}h_{ij}dz_{i}\otimes d\bar{z}_{i}$ be a hermitian metric on $\mb C^{g}$ where $(z_{1},\cdots,z_{g})$ is the complex coordinate on $\mb C^{g}$ and $H=(h_{ij})$ is a $g\times g$ complex positive definite hermitian matrix. The hermitian metric $d\widetilde{s}_{H}^{2}$ on $\mb C^{g}$ induces a K\"{a}hler metric on $\Jac(X).$ We use the same notation $d\widetilde{s}_{H}^{2}$ for the induced metric of $d\widetilde{s}_{H}^{2}$ on $\Jac(X).$ The canonical metric $ds_{H}^{2}$ on $X$ is the pull back metric $\mu^{*}d\widetilde{s}_{H}^{2}$ via $\mu,$ i.e. $ds_{H}^{2}=\mu^{*}d\widetilde{s}_{H}^{2}.$ Since $(z_{i}\circ \mu)(P)=\int_{P_{0}}^{P}\omega_{i},$ by simple computation $\mu^{*}dz^{i}=d(z^{i}\circ \mu)=\omega_{i}.$ We find that 
$$ds_{H}^{2}=\sum_{i,j=1}^{n}h_{ij}\omega_{i}\otimes\overline{\omega_{j}}.$$ 

Let $(z_{\alpha},U_{\alpha})$ be a complex local chart on $X$ and $f_{i}(z_{\alpha})dz_{\alpha}$ be the local representation of $\omega_{i}$ with respect to $(z_{\alpha},U_{\alpha}).$ Let $\mf f_{\alpha}:U_{\alpha}\to \mb C^{g}$ be the holomorphic map $$\mf f(z_{\alpha})=(f_{1}(z_{\alpha}),\cdots,f_{n}(z_{\alpha})).$$ Then the local representation of $ds_{H}^{2}$ with respect to $(z_{\alpha},U_{\alpha})$ is given by $$ds_{H}^{2}=\langle \mf f_{\alpha},\mf f_{\alpha}\rangle_{H}dz_{\alpha}\otimes d\bar{z}_{\alpha},$$
where $\langle z,w\rangle_{H}=\sum_{i,j=1}^{g}h_{ij}z_{i}\bar{w}_{j}$ for any $z=(z_{1},\cdots,z_{g})$ and $w=(w_{1},\cdots,w_{g})$ in $\mb C^{g}.$ 
The Riemann Roch Theorem tells us that $\{\omega_{1},\cdots,\omega_{g}\}$ do not have common zeros. The function $\rho_{\alpha}=\langle \mf f_{\alpha},\mf f_{\alpha}\rangle_{H}$ on $U_{\alpha}$ is positive. By a simple computation, the Gaussian curvature function on $U_{\alpha}$ has the expression
$$K(z_{\alpha})=-\frac{1}{2}\Delta \log\rho_{\alpha}=-2\frac{S_{z_{\alpha}}(\rho_{\alpha})}{\rho_{\alpha}^{3}},$$
where $S_{z_{\alpha}}(\rho_{\alpha})=\langle \mf f_{\alpha}',\mf f_{\alpha}'\rangle_{H}\langle \mf f_{\alpha},\mf f_{\alpha}\rangle_{H}-\left|\langle \mf f_{\alpha}',\mf f_{\alpha}\rangle_{H}\right|^{2}.$
By Cauchy-Schwarz inequality on $(\mb C^{g},\langle\cdot,\cdot\rangle_{H}),$ $S_{z_{\alpha}}(\rho_{\alpha})$ is a nonnegative function on $U_{\alpha},$ i.e. $S_{z_{\alpha}}(\rho_{\alpha})\geq 0$ on $U_{\alpha}.$ As a result, the Gaussian curvature $K$ is a nonpositive function on $X.$ As we have mentioned in the introduction, Lewittes proved in \cite{JL} that $K$ is negative when $X$ is not a hyperelliptic Riemann surface. Now let us study the function $K$ when $X$ is a hyperelliptic Riemann surface. 

A compact Riemann surface of genus $g\geq 2$ is hyperelliptic if it admits a positive divisor $D$ of degree two such that $l(D)\geq 2$ or equivalently there exists a nonconstant meromorphic function $\pi$ in $L(D)$ such that $\pi:X\to\mb P^{1}$ is a degree two ramified covering map. A hyperelliptic Riemann surface is conformally equivalent to the compactification of the nonsingular affine plane curve
$$C_{0}=\{(x,y)\in\mb C^{2}:y^{2}=f(x)\}$$
over $\mb C$ where $f$ is a complex polynomial of degree $2g+1$ or $2g+2$ with distinct complex roots. In this section, we will consider the case when $\deg f=2g+2$ and review some basic facts from \cite{Mum} describing the compactification of $C_{0}.$ (The proof of Theorem \ref{thm2} in the case when $\deg f=2g+1$ is the same as that in the case when $\deg f=2g+2.$) Suppose that $f(x)=\prod_{k=1}^{2g+2}(x-e_{i})$ where $\{e_{1},\cdots,e_{2g+2}\}$ are distinct complex numbers. Let $g(z)=\prod_{k=1}^{2g+2}(1-e_{i}z).$ Then $g(z)=z^{2g+2}f(1/z).$ Let $C_{0}'$ be the affine nonsingular plane curve $\{(z,w)\in\mb C^{2}:w^{2}=g(z)\}.$ Let $U_{0}$ be the affine open subset of $C_{0}$ consisting of points $(x,y)$ such that $x\neq 0$ and $U_{0}'$ be the affine open subset of $C_{0}'$ consisting of points $(z,w)$ such that $z\neq 0.$ There is an isomorphism $\varphi:U_{0}\to U_{0}'$ defined by $\varphi(x,y)=(1/x,y/x^{g+1}).$ The compactification $X$ of $C_{0}$ is the gluing of $C_{0}$ and $C_{0}'$ along the isomorphism $\varphi.$ We set $\infty_{\pm}=(0,\pm 1)$ on $C_{0}'.$ The degree two ramified covering map $\pi:X\to\mb P^{1}$ is given by
\begin{equation*}
\pi(P)=
\begin{cases}
(x(P):1)&\mbox{ if $P\in C_{0},$}\\
(1:z(P))&\mbox{ if $P\in C_{0}'.$}
\end{cases}
\end{equation*}
Here $(z_{0}:z_{1})$ is the homogenous coordinate on $\mb P^{1}.$
The Weierstrass points of $X$ are the $2g+2$ branched points $\{P_{1},\cdots,P_{2g+2}\}$ of $\pi$ such that $(x(P_{k}),y(P_{k}))=(e_{k},0)$ for $1\leq k\leq 2g+2.$ Denote $Z=\{P_{1},\cdots,P_{2g+2}\}.$ We will show that the set $Z$ is exactly the zero set of $K.$

It is well known that $H^{0}(X,\Omega_{X})$ consists of one form of the form $F(x)dx/y$ where $F$ is a polynomial in $x$ of degree at most $g-1$ and that $\{x^{i-1}dx/y:1\leq i\leq g\}$ forms a basis for $H^{0}(X,\Omega_{X}),$ for example, see \cite{Mum} and \cite{Sp}. We choose a symplectic homology $\mb Z$-basis $\{a_{1},\cdots,a_{g},b_{1},\cdots,b_{g}\}$ for $H_{1}(X,\mb Z)$ and choose a basis $\{\omega_{1},\cdots,\omega_{g}\}$ for $H^{0}(X,\Omega_{X}).$ This defines the Abel Jacobi map $\mu:X\to \Jac(X)$ after fixing a based point of $X.$ Let us write $\omega_{i}=\sum_{j=1}^{g}a_{ji}x^{j-1}dx/y$ for $1\leq i,j\leq g,$ where $(a_{ij})$ is a $g\times g$ complex invertible matrix. On $\Jac(X),$ we consider the standard flat metric $d\widetilde{s}^{2}=d\widetilde{s}_{H}^{2}$ where $H=I_{g}$ is the $g\times g$ identity matrix. Then the canonical metric on $X$ can be rewritten as
$$ds^{2}=\sum_{i=1}^{g}\omega_{i}\otimes\bar{\omega}_{i}=\left(\frac{1}{|y^{2}|}\sum_{k,j=1}^{g}c_{kj}x^{k-1}\bar{x}^{j-1}\right)dx\otimes d\bar{x},$$
where $C=(c_{kj})$ is a $g\times g$ positive definite hermitian matrix defined by $c_{kj}=\sum_{i=1}^{g}a_{ki}\bar{a}_{ji}$ for $1\leq k,j\leq g.$ When $P\neq\infty_{\pm}$ and $P\not\in W,$ the function $x$ defines a local coordinate\footnote{If we want a coordinate $\zeta$ around $P$ so that $\zeta(P)=0,$ you consider a change of coordinate $\zeta=x-x(P)$ in a neighborhood of $P.$} in an open neighborhood $U_{P}$ of $P.$ In this case, $\log |y^{2}|=\log|f(x)|$ is a nonzero harmonic function on $U_{P}.$ Define a holomorphic map $\mf f:U_{P}\to\mb C^{g}$ by $\mf f(x)=(1,x,\cdots,x^{g-1}).$ If
$$\rho(x)=\frac{\langle \mf f(x),\mf f(x)\rangle_{C}}{|y^{2}|},$$
then $ds^{2}=\rho(x)dx\otimes d\bar{x}.$ Since $\log|f(x)|$ is harmonic on $U_{P},$ we have
\begin{align*}
K(x)&=-\frac{2}{\rho(x)}\frac{\partial^{2}}{\partial x\partial\bar{x}}\log\rho\\
&=-\frac{2}{\rho(x)}\cdot\frac{\partial^{2}}{\partial x\partial\bar{x}}(\log \langle \mf f(x),\mf f(x)\rangle_{C}-\log|f(x)|)\\
&=-\frac{2|f(x)|}{ \langle \mf f(x),\mf f(x)\rangle_{C}^{3}}\cdot S(\mf f)(x)
\end{align*}
where $S(\mf f)(x)=\langle \mf f'(x),\mf f'(x)\rangle_{C}\langle \mf f(x),\mf f(x)\rangle_{C}-|\langle \mf f(x),\mf f'(x)\rangle_{C}|^{2}$ on $U_{P}.$ Since $$\mf f'(x)=(0,1,2x,\cdots,(g-1)x^{g-2})\mbox{ on $U_{P}$},$$ and is not parallel to $\mf f(x)=(1,x,x^{2},\cdots,x^{g-1})$ at any point of $U_{P}$ as a vector in $\mb C^{g},$ the function $S(\mf f)(x)$ is always positive on $U_{P}$ and hence $K$ is negative on $U_{P}.$ 

When $P=\infty_{\pm},$ in a neighborhood $U_{P}$ of $P=\infty_{+}$ or $P=\infty_{-},$ the function $z$ defines a local coordinate on $U_{P}.$ Define $\mf g:U_{P}\to\mb C^{g}$ by $\mf g(z)=(z^{g-1},z^{g-2},\cdots,z,1).$ If
$$\rho(z)=\frac{\langle\mf g(z),\mf g(z)\rangle_{C}}{|w^{2}|},$$
then $ds^{2}$ can be rewritten as $\rho(z)dz\otimes d\bar{z}.$ Similarly, the Gaussian curvature function on $U_{P}$ can be rewritten as
$$K(z)=-\frac{2|w^{2}|}{\langle \mf g(z),\mf g(z)\rangle_{C}^{3}}S(\mf g)(z).$$
Similar reason as above implies that $S(\mf g)$ is positive on $U_{P}$ for $P=\infty_{+}$ or $P=\infty_{-};$ hence $K<0$ on $U_{P}$ for $P=\infty_{+}$ or $P=\infty_{-}.$

In a neighborhood $U_{P_{k}}$ of $P_{k},$ we choose $\zeta=\sqrt{x-e_{k}}$ and define functions
$$\mf f_{k}:U_{P_{k}}\to\mb C^{g}\mbox{ by }\mf f_{k}(\zeta)=(2,2(\zeta^{2}+e_{k}),\cdots,2(\zeta^{2}+e_{k})^{g-1})$$
and $f_{k}:U_{p_{k}}\to\mb C$ by $f_{k}(\zeta)=\prod_{1\leq j\leq 2g+2, j\neq k}(\zeta^{2}+e_{k}-e_{j}).$ By definition. $f_{k}$ is a nonzero holomorphic function on $U_{P_{k}}.$ Let $\rho_{k}:U_{P_{k}}\to\mb R$ be the function $\rho_{k}(\zeta)=\langle \mf f_{k}(\zeta),\mf f_{k}(\zeta)\rangle_{C}/|f_{k}(\zeta)|.$ Then $ds^{2}$ can be rewritten as $\rho_{k}(\zeta)d\zeta\otimes d\bar{\zeta}.$ Notice that on $U_{P_{k}},$ the function $\log|f_{k}(\zeta)|$ is harmonic. Hence the Gaussian curvature has the following local expression
$$K(\zeta)=-2\frac{|f_{k}(\zeta)|}{\langle \mf f_{k}(\zeta),\mf f_{k}(\zeta)\rangle_{C}}\cdot \frac{S(\mf f_{k})(\zeta)}{\langle \mf f_{k}(\zeta),\mf f_{k}(\zeta)\rangle_{C}^{2}}$$
Notice that $\mf f_{k}'(\zeta)=4\zeta \mf g_{k}(\zeta)$ where $g_{k}(\zeta)=(0,1,2(\zeta^{2}+e_{k}),\cdots,(g-1)(\zeta^{2}+e_{k})^{g-1})$ on $U_{P_{k}}$ and hence
$$S(\mf f_{k})(\zeta)=4|\zeta|^{2}\left(\langle \mf g_{k}(\zeta),\mf g_{k}(\zeta)\rangle_{C}\langle \mf f_{k}(\zeta),\mf f_{k}(\zeta)\rangle_{C}-|\langle \mf f_{k}(\zeta),\mf g_{k}(\zeta)\rangle_{C}|^{2}\right)$$ 
Therefore we see that
$$K(\zeta)=-2|\zeta|^{2}\frac{|f_{k}(\zeta)|}{\langle \mf f_{k}(\zeta),\mf f_{k}(\zeta)\rangle_{C}^{3}}\cdot \widetilde{S}(\zeta),$$
where $\widetilde{S}(\zeta)=\langle \mf g_{k}(\zeta),\mf g_{k}(\zeta)\rangle_{C}\langle \mf f_{k}(\zeta),\mf f_{k}(\zeta)\rangle_{C}-|\langle \mf f_{k}(\zeta),\mf g_{k}(\zeta)\rangle_{C}|^{2}.$ 
Since $\mf f_{k}(\zeta)$ and $\mf g_{k}(\zeta)$ are not parallel at any point of $U_{P_{k}},$ Cauchy-Schwarz inequality implies that $\widetilde{S}(\zeta)>0$ on $U_{P_{k}}.$ Moreover, since $|f_{k}(\zeta)|$ is also positive on $U_{P_{k}}$ and $K(\zeta)=0$ if and only if $\zeta=0$ in $U_{P},$ $K$ has only one zero at $P_{k}$ in $U_{P_{k}}$ ($\zeta=0$ corresponds to the point $P_{k}$ on $C.$) We conclude that when $X$ is a complex hyperelliptic curve, the zero set $Z$ of the Gaussian curvature $K$ coincides with the set of ramification points of $\pi:X\to\mb P^{1},$ i.e. coincides with the set $Z$ of all Weierstrass points. 

Now we are ready to prove Theorem \ref{thm2}. Define $u:X\setminus Z\to\mb R$ by $u=\log(-K).$ Then $u$ is smooth and on the deleted neighborhood $U_{P_{k}}\setminus\{P_{k}\},$ the function $u$ has the local expression
$$u=\log(-K)=2\log|\zeta|+\log 2+\log|f_{k}(\zeta)|+\log \widetilde{S}(\zeta)-3\log \langle \mf f_{k}(\zeta),\mf f_{k}(\zeta)\rangle_{B}.$$
The classical analysis tells us that the action of $\Delta$ on $\log|\zeta|$ in a neighborhood of $P_{k}$ contributes a Dirac measure $2\pi\delta_{P_{k}}$ centered at $P_{k}.$ (This technique will be reviewed in a later section in order to prove a more general result.) If we denote $\Delta u+6e^{u}$ by $\Phi,$ then
\begin{equation}\label{e2}
\Delta u+6e^{u}=\Phi+4\pi\sum_{k=1}^{2g+2}\delta_{P_{k}}
\end{equation}
as a distribution on $X.$ 
When $g=2,$ $\Phi=0$ and thus (\ref{e2}) implies (\ref{MFEF}). In more detail, when $g=2,$ the conformal factor of the canonical metric on $X$ is given locally by
\begin{equation*}
\rho(\zeta)=
\begin{cases}
\dsp\frac{c_{11}+c_{12}\bar{\zeta}+c_{21}\zeta+c_{22}\zeta\bar{\zeta}}{|f(\zeta)|}&\mbox{ on $U_{P}$ for $P\not\in \{\infty_{\pm}\}\cup Z$}\\
\dsp\frac{c_{11}\zeta\bar{\zeta}+c_{12}\bar{\zeta}+c_{21}\zeta+c_{22}}{|g(\zeta)|}&\mbox{ on $U_{P}$ for $P\in\{\infty_{\pm}\}$}\\
\dsp\frac{c_{11}+c_{12}\bar{(\zeta^{2}+e_{k})}+c_{21}(\zeta^{2}+e_{k})+c_{22}|\zeta^{2}+e_{k}|^{2}}{|f_{k}(\zeta)|}&\mbox{ on $U_{P}$ for $P=P_{k}.$}
\end{cases}
\end{equation*}
and hence the corresponding Gaussian curvature function $K$ is given locally by 
\begin{equation*}
K(\zeta)=
\begin{cases}
\dsp \frac{-2|f(\zeta)|\det C}{(c_{11}+c_{12}\bar{\zeta}+c_{21}\zeta+c_{22}\zeta\bar{\zeta})^{3}}&\mbox{ on $U_{P}$ for $P\not\in \{\infty_{\pm}\}\cup Z$}\\
\dsp\frac{-2|g(\zeta)|\det C}{(c_{11}\zeta\bar{\zeta}+c_{12}\zeta+c_{21}\bar{\zeta}+c_{22})^{3}}&\mbox{ on $U_{P}$ for $P\in\{\infty_{\pm}\}$}\\
\dsp\frac{-2|\zeta|^{2}|f_{k}(\zeta)|\det C}{4^{3}(c_{11}+c_{12}\overline{(\zeta^{2}+e_{k})}+c_{21}(\zeta^{2}+e_{k})+c_{22}|(\zeta^{2}+e_{k})|^{2})^{3}}&\mbox{ on $U_{P}$ for $P=P_{k}.$}
\end{cases}
\end{equation*}
These formulae give us local expression of the function $u=\log(-K).$ By direct computation, we prove (\ref{e3}) or equivalently $\Phi=0.$

Let us prove that $\Phi\neq 0$ when $g>2.$ Let $P$ be a point on $X$ such that $x(P)=0$ and choose a coordinate neighborhood $U_{P}$ of $P.$ On $U_{P},$ we write $ds^{2}=\rho(x)dx\otimes d\bar{x}$ as before. The function $\Phi$ has the following local expression on $U_{P}:$
$$
\Phi=\Delta\log\phi=\frac{4}{\rho\phi^{2}}(\phi_{x\bar{x}}\phi-\phi_{x}\phi_{\bar{x}})
$$
where $\phi=\rho_{x\bar{x}}\rho-\rho_{x}\rho_{\bar{x}}.$ Let $C_{k}$ be the $k\times k$ principal submatrix $[c_{ij}]_{i,j=1}^{k}$ of $C$ for $1\leq k\leq g.$ After some elementary calculation, $\Phi(P)=16 \det C_{3}/\det C_{2}.$ Since $C$ is positive definite, $\det C_{k}>0$ for all $1\leq k\leq g$ by basic linear algebra. Therefore $\Phi(P)>0.$ We proved our assertion. 


\section{Mean field equations on hyperelliptic Curves}\label{mf}

In this section, we let $X$ be the complex hyperelliptic curve of genus $g$ defined by $y^{2}=f(x)$ as above in Section \ref{ab} and $ds^{2}$ be the canonical metric on $X$ defined by the Abel-Jacobi map $\mu:X\to\Jac(X),$ where $\mu$ is defined by the basis $\{\omega_{i}=x^{i-1}dx/y:1\leq i\leq g\}$ for $H^{0}(X,\Omega_{X})$ and by a(ny) symplectic basis for $H_{1}(X,\mb Z).$ In this case, the canonical metric $ds^{2}$ on $X$ has the following expression:
\begin{equation}\label{m1}
ds^{2}=\frac{1}{|y^{2}|}\left(\sum_{i=1}^{g}|x|^{2(i-1)}\right)dx\otimes d\bar{x}.
\end{equation}
Let $\sigma_{g}:[0,\infty)\to [0,\infty)$ be the function defined by $\sigma_{g}(t)=1+t+\cdots+t^{g-1}$ for $t\geq 0.$ Then the canonical metric can be rewritten as 
$$ds^{2}=\frac{\sigma_{g}(x\bar{x})}{|y^{2}|}dx\otimes d\bar{x}.$$
Using $\sigma_{g},$ we can construct a nonnegative continuous function $F_{P}:X\to\mb R$ for each $P\in X$ as follows.
For $P\in X\setminus\{\infty_{\pm}\},$ we define
$$F_{P}(Q)=
\begin{cases}
\dsp \frac{|x(Q)-x(P)|}{(\sigma_{g}(x(Q)\bar{x(Q)})(1+x(Q)\bar{x(Q)})^{2})^{\frac{1}{2g+2}}} &\mbox{ if $Q\in C_{0},$}\\
\dsp \frac{|1-z(Q)x(P)|}{(\sigma_{g}(z(Q)\bar{z(Q)})(1+z(Q)\bar{z(Q)})^{2})^{\frac{1}{2g+2}}} &\mbox{ if $Q\in C_{0}'$}
\end{cases}$$ 
and define $F_{\infty_{\pm}}:X\to\mb R$ by
$$F_{\infty_{\pm}}(Q)=
\begin{cases}
\dsp \frac{1}{(\sigma_{g}(x(Q)\bar{x(Q)})(1+x(Q)\bar{x(Q)})^{2})^{\frac{1}{2g+2}}} &\mbox{ if $x$ is a coordinate around $U_{Q}$ for $Q\in C_{0}$}\\
\dsp \frac{|z(Q)|}{(\sigma_{g}(z(Q)\bar{z(Q)})(1+z(Q)\bar{z(Q)})^{2})^{\frac{1}{2g+2}}} &\mbox{ if $z$ is a coordinate around $U_{Q}$ for $Q=\infty_{\pm}$}.
\end{cases}$$

\begin{ex}
When $g=2,$ $\sigma_{2}(t)=1+t.$ The function $F_{P}:X\to\mb R$ has the local expression
$$F_{P}(Q)=
\begin{cases}
\dsp \frac{|x(Q)-x(P)|}{\sqrt{1+x(Q)\bar{x(Q)}}} &\mbox{ if $Q\in C_{0},$}\\
\dsp \frac{|1-z(Q)x(P)|}{\sqrt{1+z(Q)\bar{z(Q)}}} &\mbox{ if $Q\in C_{0}'$}
\end{cases}$$ 
and define $F_{\infty_{\pm}}:C\to\mb R$ by
$$F_{\infty_{\pm}}(Q)=
\begin{cases}
\dsp \frac{1}{\sqrt{1+x(Q)\bar{x(Q)}}} &\mbox{ if $x$ is a coordinate around $U_{Q}$ for $Q\in C_{0}$}\\
\dsp \frac{|z(Q)|}{\sqrt{1+z(Q)\bar{z(Q)}}} &\mbox{ if $z$ is a coordinate around $U_{Q}$ for $Q=\infty_{\pm}$}.
\end{cases}$$
\end{ex}
Let $\iota:X\to X$ be the hyperelliptic involution and extend $\iota$ to a group homomorphism on $\Div(X)$ by sending $D$ to $\iota(D)=\sum_{P\in X}n_{P}\iota(P).$ For any divisor $D=\sum_{P\in X}n_{P}P$ on $X,$ we set
$$F_{D}=\prod_{P\in X}F_{P}^{n_{P}}.$$
One sees that $F_{D}$ is smooth and positive on $X\setminus (\supp(D)\cup\supp(\iota(D))).$ When $D$ is effective, $F_{D}:X\to\mb R$ is a globally defined nonnegative continuous function on $X$ whose zero set coincides with $\supp(D)\cup\supp(\iota(D)).$ It follows from the definition that $F_{\iota(P)}=F_{P}$ for any $P\in X.$ By induction, $F_{\iota(D)}=F_{D}$ for any (effective) divisor $D$ of $X.$ 
Notice that the hyperelliptic involution has $2g+2$ fixed points which are precisely the Weierstrass points of $X;$ thus $\iota(W)=W$ where $W=\sum_{i=1}^{2g+2}P_{i}$ is the Weierstrass divisor on $X.$ 

Let $\mb Z_{2}$ be the subgroup of the automorphism group $\Aut(X)$\footnote{We consider the conformal automorphism group of $X.$} of $X$ generated by the hyperelliptic involution $\iota.$ Define a $\mb Z_{2}$ action on $C(X,\mb C)$ by $(\iota\cdot f)(P)=f(\iota(P))$ for $P\in C.$ A (continuous) function $f:X\to\mb C$ is said to be $\mb Z_{2}$-invariant if $\iota \cdot f=f.$ The set $C(X,\mb C)^{\mb Z_{2}}$ of all $\mb Z_{2}$-invariant complex valued continuous functions on $X$ forms a unital complex subalgebra of $C(X,\mb C).$ Since the quotient space $X/\mb Z_{2}$ is homeomorphic to $\mb P^{1},$ the algebra $C(X,\mb C)^{\mb Z_{2}}$ of $\mb Z_{2}$ invariant function is isomorphic to the algebra of complex valued continuous functions $C(\mb P^{1},\mb C)\cong C(X/\mb Z_{2},\mb C)$ in the category of $C^{*}$-algebra.

Let $\Div^{+}(X)$ be the set of all effective divisors on $X$ and $\Eff(X)$ be the vector subspace of $C(X,\mb C)$ spanned by $\{1,F_{D}:D\in\Div^{+}(X)\}.$ An element of $\Eff(X)$ can be represented as a sum $a_{0}+\sum_{D\in\Div^{+}(X)}a_{D}F_{D}$ where $a_{0}$ and $a_{D}$ are complex numbers such that $a_{D}=0$ for all but finitely many effective divisors $D.$ Since $F_{D+D'}=F_{D}F_{D'}$ for any $D,D'\in\Div^{+}(X),$ 
\begin{align*}
&\left(a_{0}+\sum_{D\in\Div^{+}(X)}a_{D}F_{D}\right)\left(b_{0}+\sum_{D'\in\Div^{+}(X)}b_{D'}F_{D'}\right)\\
&=a_{0}b_{0}+\sum_{D\in\Div^{+}(X)}b_{0}a_{D}F_{D}+\sum_{D'\in\Div^{+}(X)}a_{0}b_{D'}F_{D'}+\sum_{D,D'\in \Div^{+}(X)}a_{D}b_{D'}F_{D+D'}.
\end{align*}
This proves that $\Eff(X)$ forms a unital complex subalgebra of $C(X,\mb C).$ Since $F_{D}$ are all real valued, $F_{D}^{*}=F_{D}$ for any $D\in\Div^{+}(X).$ This implies that $\Eff(X)$ is a $*$-subalgebra of $C(X,\mb C).$ 

For each nonconstant element $F=a_{0}+\sum_{D\in\Div^{+}(X)}a_{D}F_{D}$ of $\Eff(X),$ we define the degree of $F$ to be $\deg F=\max\{\deg D:a_{D}\neq 0\}.$ It follows from the definition that $\deg F_{D}=\deg D$ for any $D\in\Div^{+}(X).$ When $F=a_{0}$ is a nonzero constant, we set $\deg F=0.$ For each nonnegative integer $i,$ we denote by $\Eff_{i}(X)=\{F\in\Eff(X):\deg F\leq i\}.$ Then $\{\Eff_{i}(X):i\geq 0\}$ forms a filtration for $\Eff(X)$ such that $\Eff(X)$ becomes a filtered algebra over $\mb C.$

\begin{thm}
The complex unital commutative filtered $*$-subalgebra $\Eff(X)$ of $C(X,\mb C)$ forms a dense subalgebra of $C(X,\mb C)^{\mb Z_{2}}$ with respect to the infinity-norm.\footnote{The infinity norm of a complex continuous valued function $f$ on a compact Hausdorff space $X$ is defined to be $\|f\|_{\infty}=\sup_{x\in X}|f(x)|.$} 
Hence the closure of $\Eff(X)$ in $C(X,\mb C)$ is isomorphic to $C(\mb P^{1},\mb C)$ in the category of $C^{*}$-algebra.
\end{thm}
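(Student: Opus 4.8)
The plan is to apply the Stone--Weierstrass theorem to the $*$-subalgebra $\Eff(X)$ sitting inside the commutative $C^*$-algebra $C(X,\mb C)^{\mb Z_2}\cong C(\mb P^1,\mb C)$. Since $\Eff(X)$ is already known (from the discussion preceding the statement) to be a unital commutative $*$-subalgebra, and every $F_D$ is $\mb Z_2$-invariant because $F_{\iota(P)}=F_P$, we have $\Eff(X)\subseteq C(X,\mb C)^{\mb Z_2}$. The three hypotheses of Stone--Weierstrass that remain to be verified are: (i) $\Eff(X)$ contains the constants --- immediate, since $1\in\Eff(X)$ by definition; (ii) $\Eff(X)$ is closed under complex conjugation --- immediate, since each $F_D$ is real-valued so $F_D^*=F_D$; (iii) $\Eff(X)$ separates the points of the quotient $X/\mb Z_2\cong\mb P^1$, i.e. for any two points $P,Q\in X$ with $\iota(P)\ne\{P,\ldots\}$--- more precisely, lying in distinct $\mb Z_2$-orbits --- there is an element of $\Eff(X)$ taking different values at $P$ and $Q$. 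Once (iii) is established, Stone--Weierstrass gives density of $\Eff(X)$ in $C(X/\mb Z_2,\mb C)=C(X,\mb C)^{\mb Z_2}$, and the closure is then a commutative unital $C^*$-algebra isomorphic (via Gelfand duality / the homeomorphism $X/\mb Z_2\cong\mb P^1$) to $C(\mb P^1,\mb C)$.

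The main work, and the expected obstacle, is step (iii): point separation. Here I would use the explicit formulas for $F_P$. For two points $P_1,P_2$ in distinct fibres of $\pi:X\to\mb P^1$, their images $\pi(P_1),\pi(P_2)\in\mb P^1$ are distinct. I want to produce a divisor $D$ (say $D=P$ for a well-chosen Weierstrass-type or generic point $P$, or $D=\infty_+ + \infty_-$) so that $F_D(P_1)\ne F_D(P_2)$. The cleanest route: if both $P_1,P_2\in C_0$ with $x(P_1)\ne x(P_2)$, observe that $F_{\infty_+}+F_{\infty_-}$ or rather products like $F_P$ for $P\in C_0$ have numerator $|x(Q)-x(P)|$ and a denominator depending only on $x(Q)\bar{x(Q)}$ --- so by choosing $P$ appropriately (e.g. $x(P)$ real and large, or $x(P)$ chosen so that $|x(P_1)-x(P)|\ne|x(P_2)-x(P)|$ while the denominators, which are functions of $|x(P_i)|$, might still coincide) one can force inequality; and for the boundary case $x(P_1)\bar{x(P_1)} = x(P_2)\bar{x(P_2)}$ one can vary the choice of $P$ to break the tie, since $|x(Q)-x(P)|$ as a function of $P$ cannot be constant in $|x(P_i)-x(P)|$ simultaneously for two points on the same circle unless $x(P_1)=x(P_2)$. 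When one or both of $P_1,P_2$ is $\infty_\pm$, use $F_{\infty_\pm}$ and the analogous $z$-coordinate formulas. I would organize this as a short case analysis on whether the points lie in $C_0$, in $C_0'$, or are the points at infinity, each time exhibiting a separating $F_D$; the key algebraic fact is that the family $\{|x(Q)-a|^{2}\}_{a\in\mb C}$ together with $|x(Q)|^{2}$ separates the $x$-coordinates.

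A subtlety to address carefully is that $F_D$ is only continuous (not everywhere positive) --- it vanishes on $\supp(D)\cup\supp(\iota(D))$ --- but this is harmless for Stone--Weierstrass, which only needs continuity and the algebraic/separation properties; the vanishing loci are in fact a feature, since choosing $D$ supported away from $P_1,P_2$ avoids spurious coincidences of zeros. I should also note explicitly why $\Eff(X)\subseteq C(X,\mb C)^{\mb Z_2}$ rather than being all of $C(X,\mb C)$: since $F_D=F_{\iota(D)}$, every element of $\Eff(X)$ is $\mb Z_2$-invariant, so the closure cannot exceed $C(X,\mb C)^{\mb Z_2}$; combined with density this pins the closure down exactly. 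Finally, to conclude the $C^*$-algebra isomorphism statement, I would invoke that $X/\mb Z_2$ is homeomorphic to $\mb P^1$ (stated in the excerpt) so that $C(X,\mb C)^{\mb Z_2}\cong C(X/\mb Z_2,\mb C)\cong C(\mb P^1,\mb C)$ as $C^*$-algebras, and the closure of $\Eff(X)$, being a dense $*$-subalgebra that is automatically complete as a closed subspace of a Banach space, equals this whole algebra. The entire argument is therefore structurally short; essentially all the genuine content is the point-separation verification in step (iii).
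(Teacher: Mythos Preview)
Your proposal is correct and follows the same Stone--Weierstrass strategy as the paper. The only difference is in the point-separation step (iii), where you work harder than necessary: you try to pick $P$ away from $P_1,P_2$ and then argue via case analysis that the numerators $|x(P_i)-x(P)|$ can be made to differ relative to the denominators. The paper instead exploits the very zeros you tried to avoid: given distinct orbits $[P]\ne[Q]$ in $\mb P^1$, simply take $F_P$ itself. Since the zero set of $F_P$ is exactly $\{P,\iota(P)\}$, one has $F_P(P)=0$ while $F_P(Q)\ne 0$ because $Q\notin\{P,\iota(P)\}$. This one-line observation replaces your entire case analysis.
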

\begin{proof}

By definition $\iota \cdot F_{P}=F_{P}$ for any $P\in X$ and hence $F_{P}$ is $\mb Z_{2}$-invariant. We can prove by induction that $F_{D}$ are all $\mb Z_{2}$-invariant functions. Since the generating set $\{1,F_{D}:D\in \Div^{+}(X)\}$ of $\Eff(X)$ consists of $\mb Z_{2}$-invariant functions, $\Eff(X)\subseteq C(X,\mb C)^{\mb Z_{2}}.$ To show that the closure of $\Eff(X)$ in $C(X,\mb R)$ coincides with $C(X,\mb C)^{\mb Z_{2}},$ we use the Stone-Weierstrass Theorem. At first, we identify $C(X,\mb C)^{\mb Z_{2}}$ with $C(\mb P^{1},\mb C)$ and identify $F_{P}$ with the function $f_{P}:\mb P^{1}\to\mb C$ by $f_{P}([Q])=F_{P}(Q)$ where $Q$ is any representative of $[Q]\in\mb P^{1}=X/\mb Z_{2}.$ For any $[P]\neq [Q]$ on $\mb P^{1},$ we choose a representative $P$ of $[P]$ and take the function $f_{P}:\mb P^{1}\to\mb C$ defined above. Since $[P]\neq [Q],$ $x(P)\neq x(Q)$ or $z(P)\neq z(Q).$ Then $f_{P}([P])=0\neq f_{P}([Q]).$ This shows that the algebra $\Eff(X)$ separates points of $\mb P^{1}.$ Since $\mb P^{1}$ is a compact Hausdorff space with respect to its complex analytic topology, by Stone-Weierestrass Theorem, $\Eff(X)$ is dense in $C(\mb P^{1},\mb C)$ with respect to the infinity norm.
\end{proof}

We remark that this proposition implies the Gelfand spectrum of the commutative $C^{*}$-subalgebra $\bar{\Eff(X)}$ of $C(X,\mb C)$ is $\mb P^{1}$ and that when the genus of the hyperelliptic curve is two, the Gaussian curvature function $K=-2F_{W}$ is an element of the algebra $\Eff(X).$ In fact, such a complex algebra $\Eff(X)$ can be defined using the notion of metrized effective divisors on any smooth projective variety $X.$ The general study of the algebra $\Eff(X)$ for a smooth complex projective variety $X$ will be given elsewhere.\\

\begin{thm}\label{p1}
Let $u_{P}:X\setminus \{P,\iota(P)\}\to\mb R$ be the function defined by $u_{P}=\log F_{P}$ for each $P\in X.$ Then $u_{P}:X\setminus \{P,\iota(P)\}\to\mb R$ is smooth and invariant under $\iota$ with $u_{\iota(P)}=u_{P}$ such that $\Delta u_{P}$ defines a continuous linear functional on the space $C^{\infty}(X,\mb C)$ of complex valued smooth functions on $X$ (with respect to the Frechet topology) by
$$\Delta u_{P}:C^{\infty}(X,\mb C)\to\mb C,\quad\varphi\mapsto\lim_{\epsilon\to 0+}\int_{X\setminus (D_{\epsilon}(P)\cup D_{\epsilon}(\iota(P)))}u_{P}(x)\Delta \varphi(x)d\nu(x)$$
such that
\begin{equation}\label{eqb}
\Delta u_{P}=\dsp\frac{1}{g+1}K-\frac{4}{g+1}F_{W}+2\pi(\delta_{P}+\delta_{\iota(P)})
\end{equation}
Here $D_{\epsilon}(Q)$ is a closed subset of $X$ containing $Q$ isomorphic to the $\epsilon$-closed disk $\{z\in\mb C:|z|\leq \epsilon\}$ in $\mb C.$
\end{thm}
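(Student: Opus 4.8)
The plan is to compute $\Delta u_P$ locally on each type of coordinate chart, verify the stated identity away from the singular points, and then account for the Dirac masses at $P$ and $\iota(P)$ using the classical fact that $\Delta \log|\zeta| = 2\pi\delta_0$ as a distribution. First I would record that from the definition of $F_P$, on the chart $C_0$ where $x$ is a coordinate we have, for $P \notin \{\infty_\pm\}$,
\[
u_P = \log|x - x(P)| - \frac{1}{2g+2}\log\sigma_g(|x|^2) - \frac{2}{2g+2}\log(1 + |x|^2),
\]
and similarly on $C_0'$ and on the charts around $\infty_\pm$ and around the Weierstrass points (using the coordinate $\zeta = \sqrt{x - e_k}$ there). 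Since $\log|x - x(P)|$ is harmonic away from $x = x(P)$, and $x = x(P)$ corresponds to the two points $P$ and $\iota(P)$ (the fibre of $\pi$ over $\pi(P)$), the only contribution of this term to $\Delta u_P$ as a distribution is $2\pi(\delta_P + \delta_{\iota(P)})$, via the standard Green's function computation reviewed in Section~\ref{ab}; this is where the term $2\pi(\delta_P + \delta_{\iota(P)})$ comes from. The remaining work is purely the smooth part: show that
\[
\Delta\!\left(-\frac{1}{2g+2}\log\sigma_g(|x|^2) - \frac{1}{g+1}\log(1+|x|^2)\right) = \frac{1}{g+1}K - \frac{4}{g+1}F_W
\]
holds on $X \setminus Z$, and check the analogous statement on the Weierstrass-point charts.

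The key computational input is the local expression for $K$ derived in Section~\ref{ab}. Recall $ds^2 = \rho(x)\,dx\otimes d\bar x$ with $\rho = \sigma_g(|x|^2)/|f(x)|$ (using $|y^2| = |f(x)|$), and $\Delta = \tfrac{4}{\rho}\,\partial_x\partial_{\bar x}$. Since $\log|f(x)|$ is harmonic, $\Delta\log\rho = \tfrac{4}{\rho}\,\partial_x\partial_{\bar x}\log\sigma_g(|x|^2)$, and $K = -\tfrac12\Delta\log\rho = -\tfrac{2}{\rho}\,\partial_x\partial_{\bar x}\log\sigma_g(|x|^2)$. Thus $\Delta\log\sigma_g(|x|^2) = -2K$ directly. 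For the other term, a short calculation gives $\partial_x\partial_{\bar x}\log(1+|x|^2) = (1+|x|^2)^{-2}$, so $\Delta\log(1+|x|^2) = \tfrac{4}{\rho}(1+|x|^2)^{-2}$, and I need to identify $\tfrac{4}{\rho}(1+|x|^2)^{-2}$ with a multiple of $F_W$. Here I would use that $F_W = \prod_{k=1}^{2g+2} F_{P_k}$ and that, from the definitions, $\prod_k |x - e_k| = |f(x)|$, so on $C_0$
\[
F_W = \frac{|f(x)|}{\big(\sigma_g(|x|^2)(1+|x|^2)^2\big)} = \frac{|f(x)|}{\sigma_g(|x|^2)(1+|x|^2)^2},
\]
because the $\tfrac{1}{2g+2}$-th power exponents multiply out to exactly $1$ across the $2g+2$ factors. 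Hence $\tfrac{4}{\rho}(1+|x|^2)^{-2} = \tfrac{4|f(x)|}{\sigma_g(|x|^2)(1+|x|^2)^2} = 4F_W$. Combining, the smooth part of $\Delta u_P$ equals $-\tfrac{1}{2g+2}(-2K) - \tfrac{1}{g+1}\cdot 4F_W = \tfrac{1}{g+1}K - \tfrac{4}{g+1}F_W$, as claimed.

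Then I would repeat the bookkeeping on the remaining charts: on $C_0'$ with coordinate $z$, one has the analogous formula with $z$ replacing $x$, $w^2 = g(z)$, and $\prod_k|1 - e_k z| = |g(z)|$, together with $F_{\infty_\pm}$ supplying the factor $|z|$ or $1$ appropriately; the computation is formally identical after substituting the local forms of $\rho$ and $F_W$ from Section~\ref{ab}. On a Weierstrass chart $U_{P_k}$ with $\zeta = \sqrt{x - e_k}$, I would use the expressions for $\rho_k$ and $K(\zeta)$ already computed in Section~\ref{ab}, note that $u_P$ is smooth there (since $P \notin \{P_k\}$, so $x(P) \neq e_k$ and $\log|x - x(P)| = \log|\zeta^2 + e_k - x(P)|$ is harmonic on $U_{P_k}$), and verify the identity pointwise; again $\log\sigma_g(\cdot)$ and $\log(1+|\cdot|^2)$ are the only non-harmonic ingredients and their Laplacians are pinned down by the same two elementary facts. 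The main obstacle I anticipate is not conceptual but organizational: one must be careful that the factor $(1+|x|^2)^2$ in the denominator of $F_P$ transforms correctly under the coordinate change $z = 1/x$ (so that $F_P$ is genuinely a global continuous function and $F_W$ has the uniform local shape used above), and that the Dirac-mass extraction is justified — i.e. that the limit defining $\Delta u_P$ as a functional converges and picks up exactly $2\pi(\delta_P + \delta_{\iota(P)})$ with no extra boundary contributions from the two deleted disks. This is precisely the point flagged in the statement as "reviewed in a later section," so I would cite that Green's-identity argument rather than redo it here.
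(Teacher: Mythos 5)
Your proposal is correct and follows essentially the same route as the paper's proof: write $u_P$ locally as the harmonic term $\log|x-x(P)|$ minus $\tfrac{1}{2g+2}\log\sigma_g(|x|^2)-\tfrac{1}{g+1}\log(1+|x|^2)$, identify the Laplacian of the smooth part with $\tfrac{1}{g+1}K-\tfrac{4}{g+1}F_W$ via the local formulas for $K$ and $F_W$, and extract $2\pi(\delta_P+\delta_{\iota(P)})$ from the singular term by the Green's-identity/fundamental-solution argument (which the paper writes out in full within this very proof, with the boundary estimates on $\partial D_\epsilon(P)$, rather than citing it from elsewhere). Your shortcut $\Delta\log\sigma_g(|x|^2)=-2K$, read off directly from $K=-\tfrac12\Delta\log\rho$ and the harmonicity of $\log|f|$, is a mildly cleaner packaging of the same computation the paper performs by expanding $\sigma_{x\bar x}\sigma-\sigma_x\sigma_{\bar x}$, and your accounting of $F_W=|f(x)|/\bigl(\sigma_g(|x|^2)(1+|x|^2)^2\bigr)$ and of the two-point fibre $\{P,\iota(P)\}$ over $\pi(P)$ is exactly right.
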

\begin{proof}
Let $Q$ be any point on $X\setminus\{P,\iota(P)\}.$ Without loss of generality, we may assume that $Q\in C_{0}$ and $P\neq\infty_{\pm}.$ Choose an open neighborhood of $U_{Q}$ in $X\setminus\{P,\iota(P)\}$ such that $x$ defines a local coordinate on $U_{Q}.$ (For the cases when $Q\in C_{0}'$ and $P=\infty_{\pm},$ the computations are the same.) On $U_{Q},$ $u_{P}$ has the local expression
$$u_{P}(x)=\log F_{P}(x)=\log|x-x(P)|-\frac{1}{2g+2}\left(\log\sigma(x)+2\log(1+x\bar{x})\right).$$
Here $\sigma(x)=\sigma_{g}(x\bar{x})$ on $U_{Q}.$ On $U_{Q},$ $|x-x(P)|$ is positive and hence $\log|x-x(P)|$ is harmonic on $U_{Q}$. We obtain that
\begin{align*}
\Delta u_{P}(x) &= \frac{|f(x)|}{\sigma(x)}\frac{4\partial^{2}}{\partial x\partial\bar{x}}u_{P}(x)\\
                                &=\frac{4|f(x)|}{\sigma(x)}\cdot\frac{-1}{2g+2}\cdot\left(\frac{\sigma_{x\bar{x}}(x)\sigma(x)-\sigma_{x}(x)\sigma_{\bar{x}}(x)}{\sigma^{2}(x)}+\frac{2}{(1+x\bar{x})^{2}}\right)
\end{align*}
for any $x\in U_{Q}.$ On the other hand, the Gaussian curvature function has the local expression
$$K(x)=-2\frac{|f(x)|}{\sigma^{3}(x)}\cdot (\sigma_{x\bar{x}}(x)\sigma(x)-\sigma_{x}(x)\sigma_{\bar{x}}(x))$$
and $F_{W}(x)=|f(x)|/\sigma(x)(1+x\bar{x})^{2}$ on $U_{Q}.$ It follows from the definition that on $U_{Q},$
\begin{equation}\label{d1}
\Delta u_{P}(x)=\frac{K(x)}{g+1}-\frac{4}{g+1}F_{W}(x).
\end{equation}
Therefore (\ref{d1}) holds on $X\setminus \{P,\iota(P)\}.$ In a neighborhood $U_{P}$ of $P,$ the local function $|x-x(P)|$ has a zero at $P$ and hence $\log|x-x(P)|$ has a singularity at $P.$ To deal with the singularity of $\log|x-x(P)|,$ let us do the following analysis. We will do this analysis when $P$ is a Weierstrass point of $X.$ When $P$ is not a Weierstrass point of $X,$ the computation is similar and is left to the reader.

Let $P=P_{i}$ be a Weierstrass point of $X$ for $1\leq i\leq 2g+2.$ We choose $\epsilon>0$ small enough so that $\zeta=\sqrt{x-e_{i}}$ defines a local coordinate on $U_{\epsilon}(P)=\{Q\in C:|\zeta(Q)|<2\epsilon\}.$ Let $D_{\epsilon}(P)$ be the closed subset of $U_{\epsilon}(P)$ consisting of points $Q\in C$ so that $|\zeta(Q)|\leq \epsilon.$ By the Green's second identity,
\begin{equation}\label{green}
\int_{X\setminus D_{\epsilon}(P)}(u_{P}(x)\Delta \varphi(x)-\Delta u_{P}(x)\varphi(x))d\nu(x)=\oint_{\partial D_{\epsilon}(P)}\left(u_{P}(x)\frac{\partial\varphi}{\partial \mf n}(x)-\frac{\partial u_{P}}{\partial \mf n}(x)\varphi(x)\right)ds,
\end{equation}
where $ds$ is the line element of the real boundary curve $\partial D_{\epsilon}(P)$ and $\mf n$ is the outer unit normal vector field to $\partial D_{\epsilon}(P).$ On $X\setminus\{P\},$ (\ref{d1}) holds. We find that
$$\int_{X\setminus D_{\epsilon}(P)}\Delta u_{P}(x)\varphi(x)d\nu(x)=\int_{X\setminus D_{\epsilon}(P)}\left(\frac{K(x)}{g+1}-\frac{4}{g+1}F_{W}(x)\right)\varphi(x)d\nu(x).$$
By the continuities of $K$ and $F_{W}$ and $\varphi$ and the compactness of $X$ together with the Lebesgue dominated convergence theorem, we have
\begin{align*}
\lim_{\epsilon\to 0+}\int_{X\setminus D_{\epsilon}(P)}\Delta u_{P}(x)\varphi(x)d\nu
&=\lim_{\epsilon\to 0+}\int_{X}\left(\frac{K(x)}{g+1}-\frac{4}{g+1}F_{W}(x)\right)\varphi(x)\chi_{X\setminus D_{\epsilon}}(x)d\nu(x)\\
&=\int_{X}\left(\frac{K(x)}{g+1}-\frac{4}{g+1}F_{W}(x)\right)\varphi(x)d\nu(x).
\end{align*}
Here $\chi_{A}:X\to\mb C$ denotes the characteristic function of a (Borel measurable) subset $A$ of $X.$ To compute the limit of the right hand side of (\ref{green}) as $\epsilon$ tends to $0+,$ we use the local properties of $u_{P}.$ On $U_{\epsilon}(P),$ the function $u_{P}$ has the local expression:
$$u_{P}(\zeta)=2\log|\zeta|+\alpha(\zeta)$$
for some smooth function $\alpha:U_{\epsilon}(P)\to\mb R.$ By compactness of the real curve $\partial D_{\epsilon}(P)$ and the continuities of $\alpha$ and $\nabla\varphi,$ there exists $M>0$ such that
$$\left|\oint_{\partial D_{\epsilon}(P)}\alpha (\zeta)\frac{\partial\varphi}{\partial \mf n}ds\right|\leq M\epsilon.$$
Similarly, we can find $M'>0$ so that
$$\left|\oint_{\partial D_{\epsilon}(P)}2\log|\zeta|\frac{\partial\varphi}{\partial \mf n}ds\right|\leq M'|\epsilon\log\epsilon|.$$
By $\lim_{\epsilon\to 0+}\epsilon\log\epsilon=0$ and $\lim_{\epsilon\to 0+}\epsilon=0,$ we see that
$$\lim_{\epsilon\to 0+}\oint_{\partial D_{\epsilon}(P)}u_{P}\frac{\partial\varphi}{\partial \mf n}ds=0.$$
Let us compute
$$\oint_{\partial D_{\epsilon}}\frac{\partial u_{P}}{\partial \mf n}\varphi ds=\oint_{\partial D_{\epsilon}}\varphi\frac{\partial}{\partial \mf n}\left(2\log|\zeta|+\alpha(\zeta)\right)ds.$$
Again, by compactness of $\partial D_{\epsilon}(P)$ and the continuities of $\varphi$ and $\nabla \alpha,$ we can find $M''$ such that
$$\left|\oint_{\partial D_{\epsilon}(P)}\varphi\frac{\partial \alpha}{\partial \mf n}ds\right|\leq M''\epsilon.$$
By taking $\epsilon \to 0+,$ the above integral converges to $0.$ 

Let $\gamma:[0,2\pi]\to \partial D_{\epsilon}(P)$ be the parametrization of $\partial D_{\epsilon}(P)$ defined by $\gamma(t)=\zeta^{-1}(\epsilon e^{it})$ for $t\in [0,2\pi].$ The arclength function of $\partial D_{\epsilon}(P)$ is given by
$$s(t)=\int_{0}^{t}\epsilon\sqrt{\rho_{i}(\gamma(u))}du,\quad t\in [0,2\pi]$$
and the outernomal derivative of $2\log|\zeta|$ along $\partial D_{\epsilon}(P)$ at $\gamma(t)$ is $-2/\epsilon\sqrt{\rho_{i}(\gamma(t))}.$ Here $\rho_{i}$ is the conformal factor of the metric $ds^{2}$ around $P=P_{i}$ defined in Section \ref{mf}. 
By simple calculation,
\begin{align*}
\oint_{\partial D_{\epsilon}(P)}\varphi(\zeta)\left(2\frac{\partial}{\partial\mf n}\log|\zeta|\right)ds&=-2\int_{0}^{2\pi}\varphi\circ\zeta^{-1}(\epsilon e^{it})\frac{1}{\epsilon\sqrt{\rho_{i}(\gamma(t))}}\cdot \epsilon\sqrt{\rho_{i}(\gamma(t))} dt\\
&=-2\int_{0}^{2\pi}\varphi\circ\zeta^{-1}(\epsilon e^{it})dt.
\end{align*}
By continuities of $\varphi$ and $\zeta,$ we find
$$\lim_{\epsilon\to 0+}\int_{0}^{2\pi}\varphi\circ\zeta^{-1}(\epsilon e^{it})dt=\varphi\circ\zeta^{-1}(0)\cdot 2\pi.$$
Since $\zeta^{-1}(0)=P,$ we obtain that
$$\lim_{\epsilon\to 0+}\oint_{\partial D_{\epsilon}(P)}\varphi(\zeta)\left(2\frac{\partial}{\partial\mf n}\log|\zeta|\right)ds=-4\pi\varphi(P).$$
We conclude that
$$\lim_{\epsilon\to 0+}\oint_{\partial D_{\epsilon}(P)}\left(u_{P}(x)\frac{\partial\varphi}{\partial \mf n}(x)-\frac{\partial u_{P}}{\partial \mf n}(x)\varphi(x)\right)ds=4\pi \varphi(P).$$
As a consequence,
\begin{align*}
\lim_{\epsilon\to 0+}\int_{X\setminus D_{\epsilon}(P)}u_{P}(x)\Delta\varphi (x) d\nu(x)&=\int_{X}\left(\frac{K(x)}{g+1}-\frac{4}{g+1}F_{W}(x)\right)\varphi(x)d\nu(x)+4\pi\varphi(P)
\end{align*}
when $P=P_{i}$ is a Weierstrass point of $X.$ This shows that
$$\Delta u_{P}(x)=\frac{K(x)}{g+1}-\frac{4}{g+1}F_{W}(x)+4\pi\delta_{P}$$
as a continuous linear functional on $C^{\infty}(X,\mb C)$ when $P$ is a Weierstrass point. 
\end{proof}

For any divisor $D=\sum_{P\in X}n_{P}P$ on $X,$ we define a distribution $$\delta_{D}:C^{\infty}(X,\mb C)\to\mb C\mbox{ by }\delta_{D}(\varphi)=\sum_{P\in X}n_{P}\varphi(P).$$
One sees that $\delta_{D}=\sum_{P\in X}n_{P}\delta_{P}$ as a distribution on $X$ and the map $$\delta:\Div(X)\to C^{\infty}(X,\mb C)'$$ sending $D$ to $\delta_{D}$ is a group homomorphism. Define a group endomorphism on $\Div(X)$$$\alpha:\Div(X)\to\Div(X)\mbox{ by }\sum_{P\in X}n_{P}P \mapsto \sum_{P\in X}n_{P}(P+\iota(P)).$$ 
Equation (\ref{eqb}) can be rewritten as
\begin{equation}\label{simb1}
\Delta u_{P}=\dsp\frac{1}{g+1}K-\frac{4}{g+1}F_{W}+2\pi\delta_{\alpha(P)}\mbox{ for any $P\in X$}.
\end{equation}
When the genus of $X$ is two, $K=-2F_{W}$ and hence Equation (\ref{simb1}) can be simplified into the form
\begin{equation}\label{simb2}
\Delta u_{P}=-2F_{W}+2\pi\delta_{\alpha(P)}
\end{equation}
for each $P\in X.$ 


For any divisor $D\in \Div(X),$ we define a smooth function
$$u_{D}:X\setminus(\supp(\alpha(D)))\to\mb R,\mbox{ by $u_{D}=\log F_{D}.$}$$
It follows from the definition that $u_{\iota(D)}=u_{D}$ for any $D\in\Div(C)$ and $$u_{D+D'}=u_{D}+u_{D'}\mbox{ on $X\setminus \supp(\alpha(D+D')).$}$$ 
If we let $\Xi(X)$ to be the set of all $\mb Z$-linear combinations of $\{u_{P}:P\in X\},$ then $u_{D}$ belongs to $\Xi(X)$ for any divisor $D$ of $X$ and the map $$u:\Div(X)\to \Xi(X),\quad D\mapsto u_{D}$$ 
is an abelian group homomorphism.

\begin{cor}\label{p2}
For any divisor $D$ of $X,$ $u_{D}$ is smooth on $X\setminus\supp(\alpha(D))$ so that $\Delta u_{D}$ defines a continuous linear functional on $C^{\infty}(X,\mb C)$ with
$$\Delta u_{D}=\dsp\frac{\deg D}{g+1}K-\frac{4\deg D}{g+1}F_{W}+2\pi\delta_{\alpha(D)}$$
When the genus of $X$ is two, the above equation can be simplified to
$$\Delta u_{D}=(\deg D)K+2\pi\delta_{\alpha(D)}.$$
\end{cor}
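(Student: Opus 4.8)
The plan is to obtain Corollary~\ref{p2} as a formal consequence of Theorem~\ref{p1}, exploiting that $D\mapsto u_{D}$, $D\mapsto\delta_{\alpha(D)}$ and $D\mapsto\deg D$ are all group homomorphisms and that $\Delta$ acts linearly on distributions.

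First I would fix $D=\sum_{P\in X}n_{P}P$ with only finitely many $n_{P}\neq 0$, so that $u_{D}=\log F_{D}=\sum_{P}n_{P}\log F_{P}=\sum_{P}n_{P}u_{P}$ on $X\setminus\supp(\alpha(D))$; since each $u_{P}$ with $n_{P}\neq 0$ is smooth on $X\setminus\{P,\iota(P)\}\supseteq X\setminus\supp(\alpha(D))$ by Theorem~\ref{p1}, the finite sum $u_{D}$ is smooth on $X\setminus\supp(\alpha(D))$ and lies in $\Xi(X)$. Because each $\Delta u_{P}$ is, by Theorem~\ref{p1}, a continuous linear functional on $C^{\infty}(X,\mb C)$ in the Fr\'echet topology, the finite $\mb Z$-linear combination $\sum_{P}n_{P}\,\Delta u_{P}$ is again one; I take this as the definition of $\Delta u_{D}$. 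To see that it agrees with the principal-value expression $\varphi\mapsto\lim_{\epsilon\to0+}\int_{X\setminus\bigcup_{i}D_{\epsilon}(Q_{i})}u_{D}\,\Delta\varphi\,d\nu$, where $\{Q_{i}\}=\supp(\alpha(D))$, I would write the integral as $\sum_{P}n_{P}\int_{X\setminus\bigcup_{i}D_{\epsilon}(Q_{i})}u_{P}\,\Delta\varphi\,d\nu$ and note that, for fixed $P$ with $n_{P}\neq0$, the disks $D_{\epsilon}(Q_{i})$ with $Q_{i}\notin\{P,\iota(P)\}$ lie where $u_{P}$ is smooth, hence bounded, so their contribution is $O(\epsilon^{2})$; thus each summand converges to the limit computed in Theorem~\ref{p1}.

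Next I would sum \eqref{eqb} (equivalently \eqref{simb1}) over $P$ with weights $n_{P}$:
\[
\Delta u_{D}=\sum_{P}n_{P}\left(\frac{1}{g+1}K-\frac{4}{g+1}F_{W}+2\pi\delta_{\alpha(P)}\right)=\Big(\sum_{P}n_{P}\Big)\frac{K-4F_{W}}{g+1}+2\pi\sum_{P}n_{P}\,\delta_{\alpha(P)}.
\]
Since $\sum_{P}n_{P}=\deg D$, and since $\alpha:\Div(X)\to\Div(X)$ and $\delta:\Div(X)\to C^{\infty}(X,\mb C)'$ are group homomorphisms (whence $\sum_{P}n_{P}\,\delta_{\alpha(P)}=\delta_{\alpha(D)}$), this gives $\Delta u_{D}=\frac{\deg D}{g+1}K-\frac{4\deg D}{g+1}F_{W}+2\pi\delta_{\alpha(D)}$. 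For $g=2$ I would substitute the identity $K=-2F_{W}$ established in Section~\ref{ab}, so that $\frac{\deg D}{3}K-\frac{4\deg D}{3}F_{W}=\frac{\deg D}{3}K+\frac{2\deg D}{3}K=(\deg D)K$, yielding $\Delta u_{D}=(\deg D)K+2\pi\delta_{\alpha(D)}$.

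The only step requiring genuine care is the bookkeeping in the second paragraph: confirming that the ``simultaneous excision'' distribution $\Delta u_{D}$ coincides with $\sum_{P}n_{P}\,\Delta u_{P}$. This is not a real analytic difficulty — it rests only on the harmlessness of excising disks around points where the relevant $\log F_{P}$ is smooth — but it is the one place where the definitions must be matched rather than merely quoted. Everything else is the linearity of $\Delta$ and of $\delta\circ\alpha$, together with, in genus two, the identity $K=-2F_{W}$.
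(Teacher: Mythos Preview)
Your proposal is correct and follows essentially the same approach as the paper: the paper's own proof simply says that the first identity follows from equation~\eqref{simb1} together with the additivity of the group homomorphism $u:\Div(X)\to\Xi(X)$, and that the genus-two case follows from~\eqref{simb2} (equivalently, from $K=-2F_{W}$). Your second paragraph, in which you verify that the principal-value definition of $\Delta u_{D}$ agrees with $\sum_{P}n_{P}\,\Delta u_{P}$, supplies a detail the paper leaves implicit, but the overall logic is the same.
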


The Corollary \ref{p2} implies that
\begin{cor}
Let $D$ be a divisor of degree $0$ on $X.$ Then $\Delta u_{D}=2\pi \delta_{\alpha(D)}.$
\end{cor}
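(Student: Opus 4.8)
The plan is to read this off directly from Corollary \ref{p2}. That corollary gives, for an arbitrary divisor $D$ of $X$, the distributional identity
$$\Delta u_D = \frac{\deg D}{g+1}\,K - \frac{4\deg D}{g+1}\,F_W + 2\pi\,\delta_{\alpha(D)},$$
and imposing $\deg D = 0$ annihilates the two smooth terms on the right, leaving precisely $\Delta u_D = 2\pi\,\delta_{\alpha(D)}$. So once Corollary \ref{p2} is granted, there is nothing left to do; the corollary is a single substitution. (Note that nothing forces $g=2$ here: for a degree-zero divisor the genus drops out of the formula entirely.)

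If one prefers not to invoke Corollary \ref{p2} as a black box, the self-contained route is to exploit the group homomorphism $u\colon \Div(X)\to\Xi(X)$, $D\mapsto u_D$, recorded just before Corollary \ref{p2}, together with the linearity of the distributional Laplacian. Writing $D=\sum_{P\in X}n_P P$ with $\sum_P n_P = 0$ and only finitely many $n_P$ nonzero, we have $u_D = \sum_P n_P u_P$ on $X\setminus\supp(\alpha(D))$, hence $\Delta u_D = \sum_P n_P\,\Delta u_P$ as continuous linear functionals on $C^\infty(X,\mb C)$. Substituting Equation (\ref{eqb}), $\Delta u_P = \tfrac{1}{g+1}K - \tfrac{4}{g+1}F_W + 2\pi(\delta_P+\delta_{\iota(P)})$, the curvature and $F_W$ contributions each come with coefficient $\sum_P n_P = 0$ and so cancel, and what survives is $2\pi\sum_P n_P(\delta_P+\delta_{\iota(P)})$, which is $2\pi\,\delta_{\alpha(D)}$ by the definitions of $\alpha$ and of $\delta_{\alpha(D)}$.

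I do not expect any genuine obstacle: the entire analytic burden --- computing $\Delta\log F_P$ away from $P$, and extracting the Dirac mass at $P$ via Green's second identity (both for $P$ a Weierstrass point and for $P$ a non-Weierstrass point) --- has already been discharged in the proof of Theorem \ref{p1}. The only point requiring a word of care is that $u_D$ is defined only on the complement of the finite set $\supp(\alpha(D))$, so ``$\Delta u_D$'' must be understood in the distributional sense via the same $\epsilon\to 0+$ limiting of boundary terms used for each $u_P$; but this is already legitimized by the fact that each $\Delta u_P$ is a well-defined element of $C^\infty(X,\mb C)'$ and that finite linear combinations of such functionals are again such functionals.
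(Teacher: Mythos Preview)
Your proposal is correct and matches the paper's approach exactly: the paper simply records this corollary as an immediate consequence of Corollary \ref{p2}, obtained by setting $\deg D=0$ so that the $K$- and $F_W$-terms vanish. Your alternative unpacking via $u_D=\sum_P n_P u_P$ and equation (\ref{eqb}) is just the proof of Corollary \ref{p2} specialized to the degree-zero case, so it is the same argument at a finer level of detail.
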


The first equation in Corollary \ref{p2} follows directly from Equation (\ref{simb1}) and the additive properties of group homomorphism $u:\Div(X)\to\Xi(X)$ and the second equation in Corollary \ref{p2} follows from (\ref{simb2}).  Furthermore, when the genus of $X$ is two, $e^{u_{W}}=F_{W}$ and $\alpha(W)=2W.$ In this case, we obtain the following equation
$$\Delta u_{W}+2(\deg W)e^{u_{W}}=4\pi\delta_{W}$$
which coincides with the result in Theorem \ref{thm2}. 

\begin{thm}
Let $\{Q_{1},\cdots,Q_{s}\}$ be a set of $s$ distinct points of $X$ and $\{m_{1},\cdots,m_{s}\}$ be a set of $s$ positive integers. Denote the divisor $\sum_{i=1}^{s}m_{i}Q_{i}$ by $D.$ Assume that $\{Q_{1},\cdots,Q_{s}\}$ contains no Weierstrass points of $X.$ Then the following equation 
\begin{equation}\label{MFE3}
\Delta v+4\rho(D)F_{D}e^{v}=\rho(D)K+2\pi\delta _{\alpha(W-D)}.
\end{equation}
has a solution defined and smooth on $X\setminus \{Q_{1},\cdots,Q_{s},P_{1},\cdots,P_{2g+2}\},$ where $\rho(D)=\deg (W-D)/(g+1).$ 
\end{thm}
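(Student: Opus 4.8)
The idea is to look for a solution of the form $v = u_{W-D} + w$ for a suitable bounded (in fact smooth) correction term $w$, reducing the problem to a solvable Liouville-type equation on $X$. First I would compute, using Corollary~\ref{p2} applied to the divisor $W-D$ of degree $2g+2-\deg D$, that
\begin{equation*}
\Delta u_{W-D} = \frac{\deg(W-D)}{g+1}K - \frac{4\deg(W-D)}{g+1}F_{W} + 2\pi\delta_{\alpha(W-D)} = \rho(D)K - 4\rho(D)F_{W} + 2\pi\delta_{\alpha(W-D)},
\end{equation*}
so that $u_{W-D}$ already accounts for both the $\rho(D)K$ term and the Dirac mass $2\pi\delta_{\alpha(W-D)}$ on the right-hand side of \eqref{MFE3}. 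Substituting $v = u_{W-D}+w$ into \eqref{MFE3} and cancelling, the equation for $w$ becomes
\begin{equation*}
\Delta w + 4\rho(D)F_{D}e^{u_{W-D}}e^{w} = 4\rho(D)F_{W},
\end{equation*}
which, away from the support of $\alpha(W-D)$ and of $\alpha(D)$, has only smooth coefficients; the point is that $F_{D}e^{u_{W-D}} = F_{D}F_{W-D} = F_{W}$ by the multiplicativity $F_{D+D'}=F_{D}F_{D'}$, so the equation collapses to $\Delta w + 4\rho(D)F_{W}e^{w} = 4\rho(D)F_{W}$, whose constant function $w\equiv 0$ is an obvious solution.

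The upshot is that $v = u_{W-D} = \log F_{W-D}$ should itself solve \eqref{MFE3} directly, and I would present the proof that way: simply set $v=\log F_{W-D}$, note it is smooth on $X\setminus\supp(\alpha(W-D))$, observe $\supp(\alpha(W-D))\subseteq\{Q_1,\dots,Q_s,P_1,\dots,P_{2g+2}\}$ (since $\supp(W-D)\subseteq\{P_1,\dots,P_{2g+2},Q_1,\dots,Q_s\}$ and $\iota$ fixes the $P_i$ while $\alpha$ only adds $\iota(Q_j)$-type terms which, together with $W$, stay inside the listed set once one also checks the $\iota(Q_j)$ are among the allowed exceptional points — here one uses that the statement permits smoothness to fail precisely on $\{Q_1,\dots,Q_s,P_1,\dots,P_{2g+2}\}$, so I should double-check whether $\iota(Q_j)$ needs to be added or whether the hypothesis implicitly covers this). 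Then invoke Corollary~\ref{p2} to get $\Delta v = \rho(D)K - 4\rho(D)F_{W} + 2\pi\delta_{\alpha(W-D)}$, and use $e^{v}=F_{W-D}$ together with $F_{D}F_{W-D}=F_{W}$ to rewrite $4\rho(D)F_{D}e^{v} = 4\rho(D)F_{W}$, so that $\Delta v + 4\rho(D)F_{D}e^{v} = \rho(D)K + 2\pi\delta_{\alpha(W-D)}$, which is exactly \eqref{MFE3}.

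The verification is therefore almost entirely bookkeeping once Corollary~\ref{p2} is in hand: the only genuine points to nail down are (i) that $W-D$ has the right degree so that $\rho(D)=\deg(W-D)/(g+1)$ matches the coefficient in Corollary~\ref{p2}, which is immediate; (ii) the identity $F_{D}F_{W-D}=F_{W}$, which follows from $F_{D+D'}=F_{D}F_{D'}$ and $F_{\iota(P)}=F_{P}$ applied divisor-wise; and (iii) the support/smoothness claim. The main obstacle, such as it is, is (iii): $W-D$ is not effective, so $F_{W-D}$ genuinely has poles (where $F_{D}$ vanishes but $F_{W}$ does not), and one must argue that $u_{W-D}=\log F_{W-D}=u_{W}-u_{D}$ is still a well-defined distribution and that Corollary~\ref{p2}, proved there for arbitrary $D\in\Div(X)$ via the group homomorphism $u:\Div(X)\to\Xi(X)$, does apply to the non-effective divisor $W-D$. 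Since Corollary~\ref{p2} is stated for all divisors, this is legitimate, but I would spell out that the Dirac contributions from the negative part $-D$ enter $\delta_{\alpha(W-D)}$ with the correct signs. No existence theory (no variational method, no degree theory) is needed — the solution is explicit.
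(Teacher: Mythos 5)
Your proposal is correct and coincides with the paper's own (very terse) proof: the author likewise takes $v=u_{W-D}$ and leaves the verification via Corollary \ref{p2} and the identity $F_{D}F_{W-D}=F_{W}$ to the reader, which is exactly the computation you spell out. Your side remark about $\iota(Q_{j})$ is a fair observation about the exceptional set in the theorem's statement rather than a gap in the argument, since $u_{W-D}$ is indeed singular on all of $\supp(\alpha(W-D))$.
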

\begin{proof}
Then $D$ is an effective divisor on $X.$ It is not difficult to see that the function $v=u_{W-D}$ satisfies the following differential equation (\ref{MFE3}).

\end{proof}

Now we are ready to prove Theorem \ref{thmeff}. When the genus of $X$ is two, Equation (\ref{MFE3}) turns into the following differential equation 
\begin{equation}\label{MFE4}
\Delta v+2\deg (W-D)F_{D}e^{v}=2\pi\delta_{\alpha(W-D)}.
\end{equation}
Notice that Equation (\ref{MFE4}) is equivalent to Equation (\ref{MFELW2}). We complete the proof of Theorem \ref{thmeff}.

It is not clear to us, at this moment, how to construct all solutions to (\ref{MFEF}) or to (\ref{MFEF2}). It will be our future study to find the solution space to those mean field equations on complex hyperelliptic curves.\\

\bibliography{sampartb}
\begin{thebibliography}{99}\large

\bibitem{LW3} Chai, Ching-Li; Lin, Chang-Shou; Wang, Chin-Lung Mean field equations, hyperelliptic curves and modular forms: I. Camb. J. Math. 3 (2015), no. 1-2, 127-274.

\bibitem{LW1} Lin, Chang-Shou; Wang, Chin-Lung: Elliptic functions, Green functions and the mean field equations on tori. Ann. of Math. (2) 172 (2010), no. 2, 911-954.  

\bibitem{LW2} Lin, Chang-Shou; Wang, Chin-Lung A function theoretic view of the mean field equations on tori. Recent advances in geometric analysis, 173-193, Adv. Lect. Math. (ALM), 11, Int. Press, Somerville, MA, 2010.

\bibitem{JL} Lewittes, Joseph: Differentials and metrics on Riemann surfaces. Trans. Amer. Math. Soc. 139 1969 311-318. 

\bibitem{Mum} Mumford, David: Tata lectures on theta. II. Jacobian theta functions and differential equations. With the collaboration of C. Musili, M. Nori, E. Previato, M. Stillman and H. Umemura. Reprint of the 1984 original. Modern Birkhäuser Classics. Birkhäuser Boston, Inc., Boston, MA, 2007. xiv+272 pp. ISBN: 978-0-8176-4569-4; 0-8176-4569-1

\bibitem{Sp} Springer, George: Introduction to Riemann surfaces. Addison-Wesley, 1957.

\end {thebibliography}

\end{document}